\theoremstyle{plain}
\newtheorem{thm}{Theorem}[section]
\newtheorem{lem}[thm]{Lemma}
\newtheorem{corollary}[thm]{Corollary}
\newcommand{\re}{\mathrm{e}} 
\newcommand{\rP}{\mathrm{P}} 
\newcommand{\rE}{\mathrm{E}} 
\newcommand{\keywords}[1]
           {\begin{center}
            \begin{minipage}{315.83pt}
            \small
            \noindent \emph{Keywords:}~{\textrm{#1}}
            \end{minipage}
            \end{center}
            \normalsize
           }
\newcommand{\ams}[1]
           {\begin{center}
            \begin{minipage}{315.83pt}
            \small
            \noindent 2010 Mathematics Subject Classification:~{\uppercase{#1}}
            \end{minipage}
            \end{center}
            \par\normalsize
           }
\begin{document}
\title{Asymptotic results for the number of Wagner's solutions to a generalised birthday problem
}
\author{Alexey Lindo and Serik Sagitov\\{\it Chalmers University of Technology and University of Gothenburg}
}
\date{}
\maketitle
\begin{abstract}
We study two functionals of a random matrix $\boldsymbol A$ with independent elements uniformly distributed over the cyclic group of integers $\{0,1,\ldots, M-1\}$ modulo $M$. One of them, $V_0(\boldsymbol A)$ with mean $\mu$,  gives the total number of solutions for a generalised birthday problem, and the other, $W(\boldsymbol A)$ with mean $\lambda$, gives the number of solutions detected by Wagner's tree based algorithm. 

We establish two limit theorems. Theorem \ref{pnm} describes an asymptotical behaviour of the ratio $\lambda/\mu$ as $M\to\infty$. Theorem \ref{TH} 
suggests Chen-Stein bounds for the total variation distance between Poisson distribution and distributions of $V_{0}$ and $W$.
\end{abstract}

\ams{60B20, 60C05, 60F05}
\keywords{Chen-Stein's method, Functionals of random matrices}

\section{Introduction}
Let $(N,M,L)$ be three natural numbers larger than or equal to 2. Assume that we have a random matrix 
\begin{equation}\label{mA}
\boldsymbol A=(a_{ij}), \ {1\le i\le L, \ 1\le j\le N}
\end{equation}
 with  independent elements $a_{ij}$ which are uniformly distributed on $\{0,1,\ldots, M-1\}$.
Let $\boldsymbol J = \{1,\ldots,L\}^{N}$ be the set of  matrix positions, so that  $|\boldsymbol J| = L^{N}$.
For each $b\in\{0,1,\ldots, M-1\}$, define $V_b\equiv V_b(\boldsymbol A)$ as the number of vectors $\boldsymbol i=(i_1,\ldots, i_{N})\in \boldsymbol J$ with
$$a_{i_1,1}+\ldots+a_{i_N,N}\stackrel{M}{=}b,$$
where the sign $\stackrel{M}{=}$ means equality modulo $M$.
Clearly, $\sum_{b=0}^{M-1} V_b=L^N$, so that by the assumption of uniform distribution,  
\[\mu:=\rE(V_0)=L^NM^{-1}.\]

The problem of finding all $V_0$ zero-sum vectors 
\begin{equation}\label{ai}
 \boldsymbol a_{\boldsymbol  i}=(a_{i_1,1},\ldots,a_{i_N,N}),\quad \boldsymbol i=(i_1,\ldots, i_{N})\in \boldsymbol J
\end{equation}
for a given matrix $\boldsymbol A$,  can be viewed as a generalised birthday problem. It arises naturally in a variety of situations including cryptography, see~\cite{Wagner2002} and reference therein; ring linear codes~\cite{Gr}; abstract algebra, where in the theory of modules it is related to the notion of annihilator, see e.g.~\cite{L2002}.
This problem can be solved only by exhaustive search and is $NP$-hard \cite{SSh}. Wagner \cite{Wagner2002} proposed a subexponential algorithm giving hope to quickly  detect at least some of the solutions to this kind of problems.

Assume that $N = 2^{n}$, $n \ge 1$ and $M = 2^{m} + 1$, $m \ge n$. It  will be convenient to use the symmetric form
  $$D_m:=\{ -2^{m - 1}, \ldots, -1,0,1,\ldots, 2^{m - 1} \}$$ 
of $\{0,1,\ldots, M-1\}$ as the set of possible values for $a_{ij}$.  Wagner's algorithm has a binary tree structure, see Figure \ref{waga}, starting from $N$ leaves at level $n$ and moving toward the top of the tree at level 0.  For a given a vector  $\boldsymbol x=(x_1,\ldots, x_{2^n})$ with $x_j\in D_m$ the algorithm searches for the value 
  \begin{equation}\label{hn}
  H_n(\boldsymbol x):=x_{1}^{(n)} \in D_{m-n} \cup\{\Delta\},
\end{equation}
obtained recursively in a way explained next (the special state $\Delta$ indicates that the algorithm is terminated and a solution is not found).  
 \begin{figure}[ht]
   \centering
     \includegraphics[width=8cm]{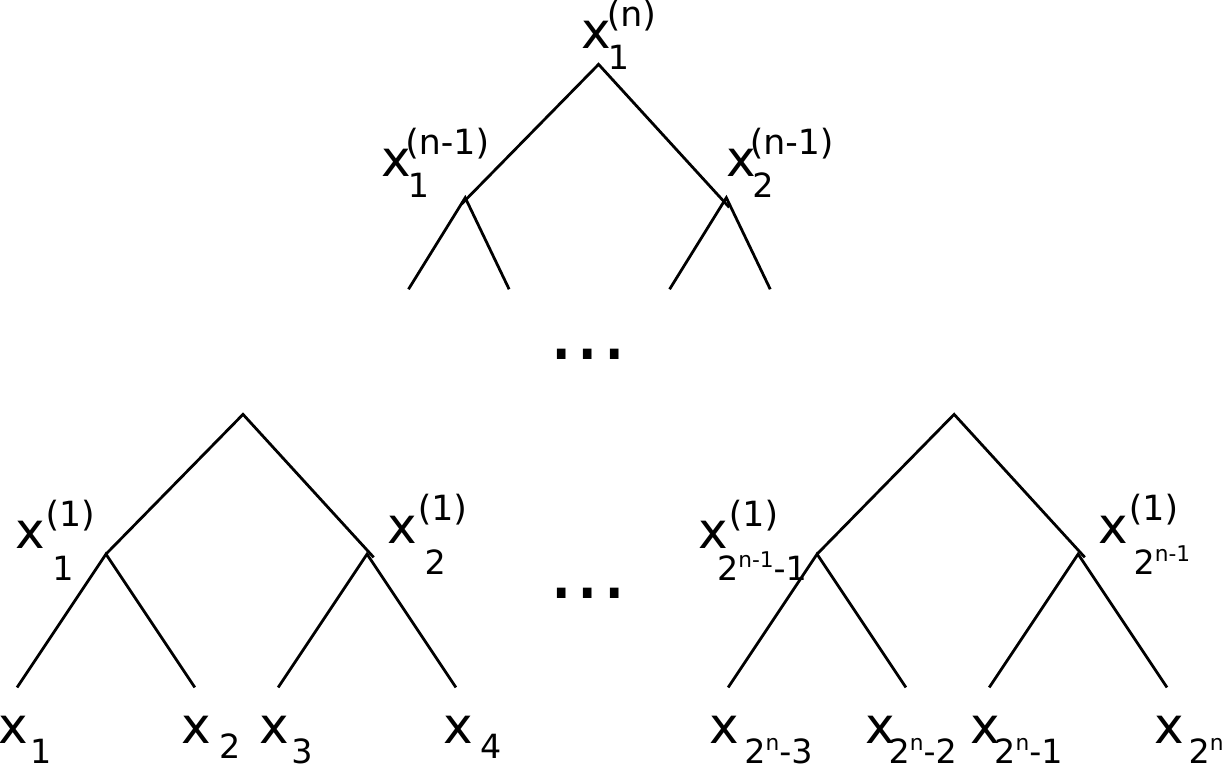}
   \caption{Wagner's algorithm}
   \label{waga}
 \end{figure}
Put $x_j^{(0)}\equiv x_j$. For $h=1,\ldots, n$ and $j=1,\ldots, 2^{n-h}$, let $x_j^{(h)}=b$ if there exists such a $b\in D_{m-h}$ that 
\[x_{2j-1}^{(h-1)}+x_{2j}^{(h-1)}\stackrel{M}{=}b,\]
and put $x_j^{(h)}=\Delta$ otherwise. In particular, if $x_{k}^{(h-1)}=\Delta$ for at least one of the two indices $k\in\{2j-1,2j\}$, then $x_j^{(h)}=\Delta$.

A vector  $\boldsymbol x$ will be called a Wagner's solution to the generalised birthday problem, if $H_n(\boldsymbol x) =0$. The total number $W\equiv W(\boldsymbol A)$ of Wagner's solutions among the vectors \eqref{ai} has mean
$$\lambda := \rE( W) = L^{N} p_{n,m},$$
where 
$$p_{n,m}:=\rP(H_n(\boldsymbol a_{\boldsymbol  i})=0),\quad \boldsymbol i\in \boldsymbol J.$$
The proportion of Wagner's solutions can be characterised by the ratio of the means 
\begin{equation}\label{rm}
R_{n,m}:=\lambda/\mu=(2^m+1)p_{n,m}. 
\end{equation}
Clearly, $R_{n,m}$ is the conditional probability of a given zero-sum random vector to be Wagner's solution.

There is a growing number of papers studying the properties of various tree based algorithms with some of them, in particular \cite{MiSi}, suggesting further developments of Wagner's approach. The main results of this paper are stated in the next section. Theorem \ref{pnm} gives an integral recursion for calculating the limit for the key ratio \eqref{rm}. Theorem \ref{TH}
suggests Chen-Stein bounds for the total variation distance between Poisson distribution and distributions of $V_{0}$ and $W$.
(Among related results concerning speed of convergence for functional of random matrices over finite algebraical structures we can only name a recent paper \cite{FG2015}.) 

\section{Main results}

Define a sequence of polynomials $\{\phi_n(x)\}_{n\ge1}$ by 
 \begin{equation}\label{recp}
\phi_n(x):=\int_0^x\phi_{n-1}(u)\phi_{n-1}(x-u)du+2\int_x^{2^{-n}}\phi_{n-1}(u)\phi_{n-1}(u-x)du,
\end{equation}
 with $\phi_1(x)\equiv1$.


\begin{thm} \label{pnm}
For any fixed natural number $n$, 
 \[R_{n,m}\to\phi_n(0),\quad m\to\infty,\]
 where the limit is obtained from the integral recursion \eqref{recp}.
 \end{thm}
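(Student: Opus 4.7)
The strategy is to view $R_{n,m}$ as the value at the origin of a suitably rescaled lattice density, and to prove by induction on $n$ that this density converges to the function $\phi_n$ furnished by the recursion~\eqref{recp}. Writing $M := 2^m + 1$, for each level $h \in \{1, \ldots, n\}$ I introduce
\[
g^{(m)}_h(y) := M \cdot \rP\bigl(x_1^{(h)} = yM\bigr), \qquad y \in D_{m-h}/M,
\]
to be thought of as a discrete density approximation on $[-2^{-h-1}, 2^{-h-1}]$. Since $R_{n,m} = M\,p_{n,m} = g^{(m)}_n(0)$, the theorem reduces to $g^{(m)}_n(0) \to \phi_n(0)$; I would actually prove the stronger statement that $g^{(m)}_h$ converges uniformly on $[-2^{-h-1}, 2^{-h-1}]$ to the even extension of $\phi_h$, by induction on $h$.

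The base case $h = 1$ is direct: for every $x \in D_{m-1}$, uniformity of $a_{i_2,2}$ yields $\rP(a_{i_1,1} + a_{i_2,2} \stackrel{M}{=} x) = 1/M$, and so $g^{(m)}_1 \equiv 1 = \phi_1$. For the inductive step $h \ge 2$, independence of the two halves of Wagner's binary tree gives the discrete convolution
\[
\rP\bigl(x_1^{(h)} = x\bigr) = \sum_{\substack{y, z \in D_{m-h+1} \\ y+z \stackrel{M}{=} x}} \rP\bigl(x_1^{(h-1)} = y\bigr)\, \rP\bigl(x_1^{(h-1)} = z\bigr), \qquad x \in D_{m-h}.
\]
For $h \ge 2$ the crude bound $|y+z-x| \le 2^{m-h+1} + 2^{m-h-1} < M$ rules out all wrap-around contributions, so the congruence reduces to the equality $y+z = x$, and after rescaling by $M$ the right-hand side becomes a Riemann sum of mesh $1/M$:
\[
g^{(m)}_h(X) = \frac{1}{M} \sum_{Y \in D_{m-h+1}/M} g^{(m)}_{h-1}(Y)\, g^{(m)}_{h-1}(X-Y).
\]
By the inductive hypothesis, $g^{(m)}_{h-1}$ converges uniformly on its compact support $[-2^{-h}, 2^{-h}]$ to the continuous (in fact polynomial, as the recursion preserves the polynomial class) function $\phi_{h-1}$. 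Consequently the Riemann sum converges uniformly in $X$ to $\int \phi_{h-1}(Y)\, \phi_{h-1}(X-Y)\,dY$. Splitting this real convolution at $Y = 0$, exploiting the evenness of $\phi_{h-1}$, and changing variables $u = X + Y$ in the negative half reproduces exactly the right-hand side of~\eqref{recp}. This closes the induction, and evaluation at $X = 0$ yields the theorem.

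The delicate step is the passage from the modular convolution to the real convolution at the inductive step: the size bound $|y+z-x| < M$ holds only for $h \ge 2$, which is precisely why the base case $h = 1$ must be handled by a separate direct calculation. A minor secondary nuisance is a boundary-layer correction of size $O(1/M)$ at the edges of the lattice $D_{m-h+1}/M$ when replacing the Riemann sum by its integral, but it is dominated by the $O(1)$ leading term $\phi_h$ and is absorbed into the inductive error.
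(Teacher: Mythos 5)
Your proposal is correct and follows essentially the same route as the paper: your discrete convolution identity, the observation that wrap-around modulo $M$ disappears for levels $h\ge 2$, and the use of evenness to fold the convolution reproduce Lemma \ref{thm:pi}, while your induction on $h$ with uniform convergence of the rescaled densities and Riemann-sum approximation is exactly the paper's proof of \eqref{unif}. The only cosmetic differences are that you keep the symmetric two-sided density and defer the fold-over to the end, whereas the paper folds first and then compares the two discrete recursions term by term with an explicit error bound via the constants $C_n$.
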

To illustrate Theorem \ref{pnm}, take $N=16$, $L=1000$, and $M=10^{45}$. Then the expected number of zero-sum vectors is  $\mu=1000$. In practice, finding all zero-sum vectors out of $L^N=10^{48}$ candidates is a time consuming task. 
In this example we have $n=4$ and $m$ is approximately 150. Judging from Figure \ref{latomu} illustrating the typical values for the proportion factor $R_{n,m}$ using numerical computations based on the recursions for \eqref{Rnm} presented in the next section, out of a thousand solutions the Wagner algorithm will catch no more than one.

\begin{figure}[ht]
  \centering
    \includegraphics[width=10cm, height=8cm]{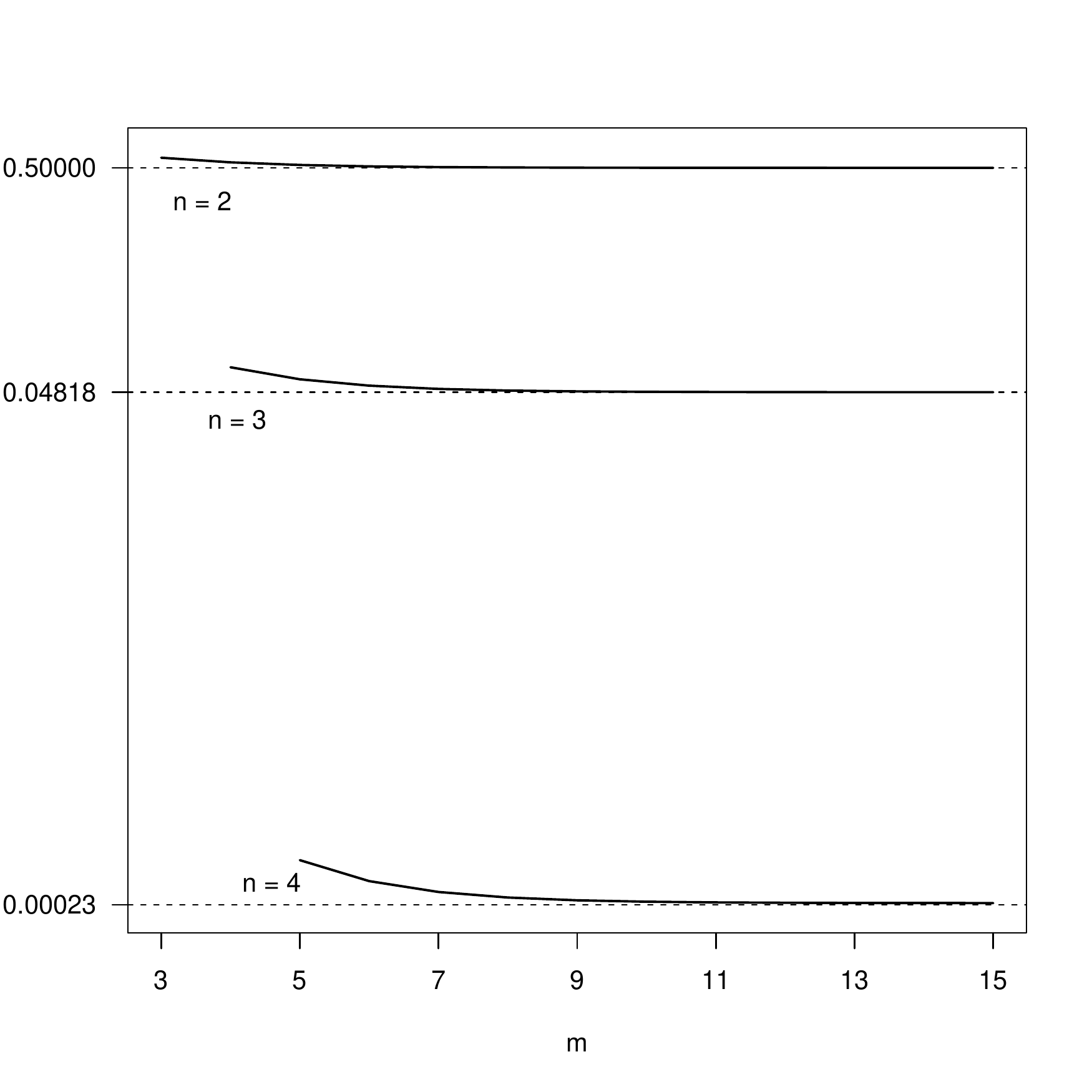}
  \caption{The ratio of the means \eqref{rm} for $n=2,3,4$ are plotted as functions of $m$. The limits predicted by Theorem \ref{pnm} are indicated by horizontal dotted lines.}
  \label{latomu}
\end{figure}

\begin{thm}\label{TH}
For a random matrix \eqref{mA} consider the number $V_0$ of vectors \eqref{ai} such that $a_{i_1,1}+\ldots+a_{i_N,N}\stackrel{M}{=}0$.
Then
  \begin{equation*} 
    \sum_{k = 0}^{\infty} \Big| \rP(V_0 = k) - \frac{\mu^{k} \re^{-\mu}}{k!} \Big| \le 4(1-\re^{-\mu} )M^{-1},
  \end{equation*}
where $\mu=L^NM^{-1}$. Furthermore, if  $N = 2^{n}$ and $M=2^m+1$, $m > n$, then with $\lambda=L^Np_{n,m}$
  \begin{equation*}
    \sum_{k = 0}^{\infty} \Big| \rP(W = k) - \frac{\lambda^{k} \re^{-\lambda}}{k!} \Big| \le 8 (1-\re^{-\lambda} )\mu  NL^{-1}.
  \end{equation*}
\end{thm}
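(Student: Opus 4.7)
The plan is to apply the Chen-Stein method in the Arratia-Goldstein-Gordon local-dependence formulation. Represent both quantities as sums of indicators: $V_0=\sum_{\boldsymbol i\in\boldsymbol J}X_{\boldsymbol i}$ and $W=\sum_{\boldsymbol i\in\boldsymbol J}Y_{\boldsymbol i}$, with $X_{\boldsymbol i}=\mathbf{1}\{a_{i_1,1}+\cdots+a_{i_N,N}\stackrel{M}{=}0\}$ and $Y_{\boldsymbol i}=\mathbf{1}\{H_n(\boldsymbol a_{\boldsymbol i})=0\}$. As dependence neighbourhood take
\[
B_{\boldsymbol i}=\{\boldsymbol j\in\boldsymbol J:j_l=i_l\text{ for at least one }l\},\qquad |B_{\boldsymbol i}|=L^N-(L-1)^N\le NL^{N-1}.
\]
The crucial structural point is that $X_{\boldsymbol i}$ and $Y_{\boldsymbol i}$ are measurable functions of the $N$ matrix entries $\{a_{i_l,l}\}_{l=1}^{N}$, whereas $\{X_{\boldsymbol j},Y_{\boldsymbol j}:\boldsymbol j\notin B_{\boldsymbol i}\}$ are measurable with respect to a disjoint family of entries of $\boldsymbol A$; hence $X_{\boldsymbol i}$ (respectively $Y_{\boldsymbol i}$) is genuinely independent of the $\sigma$-algebra generated by the outside indicators, so the long-range correction term $b_3$ in the Arratia-Goldstein-Gordon inequality vanishes and one is left with a bound of the form
\[
\sum_{k\ge 0}\bigl|\rP(V_0=k)-\mu^k\re^{-\mu}/k!\bigr|\le 2(b_1+b_2)\frac{1-\re^{-\mu}}{\mu},
\]
together with its analogue for $W$ with $\mu$ replaced by $\lambda$.

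For the $V_0$ half of the theorem, the joint probabilities collapse: whenever $\boldsymbol i\neq\boldsymbol j$ the vectors must disagree in at least one coordinate, so the sums $S_{\boldsymbol i}=\sum_l a_{i_l,l}$ and $S_{\boldsymbol j}=\sum_l a_{j_l,l}$ each split as a common shared part plus an independent private part containing at least one uniform summand. Conditional on the shared entries the two private sums are independent and uniform on $\mathbb Z/M\mathbb Z$, giving $\rE(X_{\boldsymbol i}X_{\boldsymbol j})=M^{-2}$ exactly, i.e.\ the $X_{\boldsymbol i}$ are pairwise independent. Substituting $p_{\boldsymbol i}=M^{-1}$ and the cardinality estimate $|B_{\boldsymbol i}|\le NL^{N-1}$ into $b_1+b_2$ and using $\mu=L^N/M$ delivers the announced upper bound.

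The argument for $W$ follows the same skeleton with $p_{\boldsymbol i}=p_{n,m}$ and $\lambda=L^Np_{n,m}$, but now the joint probability $\rE(Y_{\boldsymbol i}Y_{\boldsymbol j})$ does not reduce to a single modular constraint: Wagner's algorithm imposes an admissibility constraint at each of the $n$ internal levels of the binary tree of Figure~\ref{waga}, and every shared leaf propagates dependence up the tree. The plan is to analyse the joint event recursively, classifying each internal node at height $h$ according to whether the block of $2^h$ leaves it controls is identical, disjoint, or partially overlapping between $\boldsymbol i$ and $\boldsymbol j$. Conditioning on the outputs of the children at each such node and tracking which admissibility constraints remain genuinely new (rather than being forced by the constraints below the overlap) should yield a bound of the form $\rE(Y_{\boldsymbol i}Y_{\boldsymbol j})\le c\,p_{n,m}^{2}$ for some absolute constant $c$, after which $b_1+b_2$ is estimated exactly as in the $V_0$ case and the claimed $\mu NL^{-1}$ rate drops out.

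The principal obstacle is this recursive tree-wise analysis for $W$: one must keep book, node by node, of which modular constraints contribute a genuinely independent factor and which are already forced below the overlap, and it is not immediately obvious that $\rE(Y_{\boldsymbol i}Y_{\boldsymbol j})=O(p_{n,m}^{2})$ holds uniformly over all overlap patterns between $\boldsymbol i$ and $\boldsymbol j$. Induction on the height of the Wagner tree, with the overlap pattern of leaves as inductive parameter, is the natural device; once it is in place, the remaining work -- verifying that $b_3=0$ and substituting into the Arratia-Goldstein-Gordon inequality -- is routine and runs strictly parallel to the $V_0$ computation.
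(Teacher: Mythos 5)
Your overall framework (Arratia--Goldstein--Gordon with a local dependence neighbourhood, $b_3=0$ because the indicators outside $B_{\boldsymbol i}$ are functions of a disjoint set of matrix entries) is the same as the paper's, but there are two concrete gaps. First, for $V_0$ your choice of neighbourhood is too large to produce the stated constant. With $B_{\boldsymbol i}=\{\boldsymbol j: j_l=i_l \text{ for some } l\}$ you get $b_1+b_2\asymp 2L^N\big(L^N-(L-1)^N\big)M^{-2}$, and the Chen--Stein inequality then yields a bound of order $(1-\re^{-\mu})NL^{N-1}M^{-1}$, not $4(1-\re^{-\mu})M^{-1}$; the cardinality estimate $|B_{\boldsymbol i}|\le NL^{N-1}$ does \emph{not} ``deliver the announced upper bound.'' The pairwise independence $\rE(X_{\boldsymbol i}X_{\boldsymbol j})=M^{-2}$ that you correctly establish is exactly what lets one shrink the neighbourhood to the singleton $B_{\boldsymbol i}=\{\boldsymbol i\}$ (as the paper does via its Lemma \ref{CS}), giving $b_1=L^NM^{-2}=\mu M^{-1}$ and $b_2=0$, whence the factor $4(1-\re^{-\mu})M^{-1}$.

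Second, and more seriously, the heart of the $W$ bound is left unproved: you defer the estimate $\rE(Y_{\boldsymbol i}Y_{\boldsymbol j})\le c\,p_{n,m}^2$ to a recursive node-by-node analysis of the Wagner tree and acknowledge it as the principal obstacle. That analysis is unnecessary and is not what the theorem requires. The paper's argument is a one-line domination: since a Wagner's solution is in particular a zero-sum vector, $Y_{\boldsymbol k}\le \mathbf{1}\{a_{k_1,1}+\cdots+a_{k_N,N}\stackrel{M}{=}0\}$, so
\[
\rE(Y_{\boldsymbol i}Y_{\boldsymbol j})\le \rP\big(a_{j_1,1}+\cdots+a_{j_N,N}\stackrel{M}{=}0;\ H_n(\boldsymbol a_{\boldsymbol i})=0\big)=M^{-1}p_{n,m},
\]
the last equality by conditioning on the shared coordinates exactly as in your $V_0$ computation (the private part of $\boldsymbol j$'s sum is uniform and independent of $\boldsymbol a_{\boldsymbol i}$). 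This gives $b_2\le NL^{-1}\lambda\mu$, which together with $b_1\le NL^{-1}\lambda^2\le NL^{-1}\lambda\mu$ yields $8(1-\re^{-\lambda})\mu NL^{-1}$. Note that the presence of $\mu$ rather than $\lambda$ in the stated bound is itself the signature of the estimate $M^{-1}p_{n,m}$: if your conjectured $c\,p_{n,m}^2$ bound were available it would give a different (sharper) right-hand side, but as written your proof of the second inequality is incomplete.
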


According to Theorem \ref{TH}, Poisson approximation for $V_0$ works well when $L^N\ll M$. For $W$, a sufficient condition for the Chen-Stein bound to be small is $NL^{N-1}\ll M$. 

\section{Key recursion}
Consider a backward recursion
\begin{equation}\label{recu}
 v_{i}(j) = \sum_{k = 0}^{j} v_{i+1}(k) v_{i+1}(j-k) + 2\sum_{k = j+1}^{2^{i}} v_{i+1}(k) v_{i+1}(k-j)
\end{equation}
involving a system of vectors $(v_i(0), \ldots, v_i(2^{i-1}))$ for $i\ge1$. In particular, we have 
\begin{align*}
 v_{i}(0) &= v_{i+1}^2(0) + 2\sum_{k = 1}^{2^{i}} v_{i+1}^2(k).
\end{align*}
For $ 1\le i\le m-1$, denote by $v_i^{(m)}(j)$ the unique solution of \eqref{recu} determined by the following frontier condition
  \begin{equation*}
    v_{m-1}(0) = \cdots = v_{m-1}(2^{m - 2}) = (1+2^m)^{-1}.
  \end{equation*}
By the forthcoming Corollary \ref{cor}, we can write $p_{n,m}=v_{m-n}^{(m)}(0)$ so that 
 \begin{equation}\label{Rnm}
R_{n,m}=(1+2^m)v_{m-n}^{(m)}(0),\quad n=1, \ldots,m-1.
\end{equation}

\begin{lem} \label{thm:pi}
Let  $1\le n\le m-1$ and $H_n(\boldsymbol x)$ be defined by \eqref{hn}. Assuming that $\boldsymbol x$ is a random vector with independent component uniformly distributed over $D_m$, put
\[p_{i,m}(j) := \rP(H_i(\boldsymbol x)\stackrel{M}{=}j).\]
Then
  \begin{equation*} 
    p_{1,m}(-2^{m - 2}) = \cdots = p_{1,m}(2^{m - 2}) = (2^m+1)^{-1},
  \end{equation*}
 and for $2\le i \le m-1$ and  $0 \le j \le 2^{m - i - 1}$, we have $ p_{i,m}(-j) = p_{i,m}(j) $ with $p_{i,m}(j) $ satisfying the recursion
   \[ p_{i,m}(j) = \sum_{k = 0}^{j} p_{i-1,m}(k) p_{i-1,m}(j-k)+2\sum_{k = j+1}^{2^{m - i }} p_{i-1,m}(k) p_{i-1,m}(k-j).
 \]
\end{lem}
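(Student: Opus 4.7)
I would proceed by induction on $i$. For the base case $i=1$, observe that $x_1$ and $x_2$ are independent uniform variables on the cyclic group $D_m$ of order $M = 2^m+1$; the convolution of two such distributions is again uniform on $D_m$ (the sum of any independent pair in which one marginal is uniform on the group is itself uniform). Hence for each $b \in D_{m-1} \subset D_m$, the event $x_1 + x_2 \stackrel{M}{=} b$ occurs with probability $1/M = (2^m+1)^{-1}$, yielding the stated base case.

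For the inductive step, let $U := x_1^{(i-1)}$ and $V := x_2^{(i-1)}$ denote the outputs of $H_{i-1}$ applied to the first and second halves of $\boldsymbol x$. By the tree structure and independence of the entries, $U$ and $V$ are i.i.d.\ with $\rP(U = k) = p_{i-1,m}(k)$ for $k \in D_{m-i+1}$ and the remaining mass on the termination state $\Delta$. By definition of the algorithm, $H_i(\boldsymbol x) = j$ for $j \in D_{m-i}$ iff $U \ne \Delta$, $V \ne \Delta$, and $U + V \stackrel{M}{=} j$. The key point is that this modular equivalence reduces to strict integer equality: since $i \ge 2$, the bounds $|U + V| \le 2^{m-i+1} \le 2^{m-1}$ and $|j| \le 2^{m-i-1} \le 2^{m-1}$ give $|U+V - j| < 2^m < M$, so the only integer multiple of $M$ compatible with $U+V \stackrel{M}{=} j$ is zero, and the event reduces to $U + V = j$.

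It then remains to convolve, $p_{i,m}(j) = \sum_{k \in D_{m-i+1}} p_{i-1,m}(k)\, p_{i-1,m}(j-k)$, and to rearrange this sum by splitting according to the sign of $k$ and invoking the inductive symmetry $p_{i-1,m}(-\ell) = p_{i-1,m}(\ell)$. For $0 \le j \le 2^{m-i-1}$, the contribution from the negative indices $k \in \{-2^{m-i},\ldots,-1\}$, after the substitution $k \mapsto -k$ and the reindexing $\ell = k+j$, becomes $\sum_{\ell=j+1}^{2^{m-i}} p_{i-1,m}(\ell)\, p_{i-1,m}(\ell-j)$, which coincides with the contribution from $k \in \{j+1,\ldots,2^{m-i}\}$ in the positive range; combining these produces the factor $2$ in the stated recursion, while $\sum_{k=0}^{j}p_{i-1,m}(k)\,p_{i-1,m}(j-k)$ comes from the range $0 \le k \le j$. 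The symmetry $p_{i,m}(-j) = p_{i,m}(j)$ follows because $(U,V) \eqdist (-U,-V)$ by the inductive symmetry, so $U+V \eqdist -(U+V)$.

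The main technical obstacle I anticipate is the careful index bookkeeping when rearranging the convolution and matching the summation ranges with the support $D_{m-i+1}$ of $p_{i-1,m}$; by contrast, the modular-to-integer reduction, although conceptually the step that makes the whole recursion work, is ultimately a short magnitude estimate driven by the choice $M = 2^m+1$ and the condition $i \le m-1$.
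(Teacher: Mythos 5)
Your proposal is correct and follows essentially the same route as the paper: induction on $i$, uniformity of the convolution on the cyclic group for the base case, reduction of the modular sum to an integer convolution for $i\ge 2$, and the same split-and-reindex of the convolution using the symmetry $p_{i-1,m}(-\ell)=p_{i-1,m}(\ell)$ to produce the factor $2$. The only differences are cosmetic: the paper justifies the base case and the absence of wraparound by explicitly enumerating the ordered pairs summing to $j$, where you use the group-convolution fact and a magnitude estimate, and your distributional argument $(U,V)\eqdist(-U,-V)$ makes the symmetry step slightly more explicit than the paper's brief assertion.
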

\begin{proof}
  There are exactly $M=2^m+1$ different ordered pairs of numbers from the set $D_m$  that add modulo $M$ up to a given $j\in D_{m-1}$.  These pairs have the form:   for $j=0$, 
    \[(-2^{m-1}+k,2^{m-1}-k), k=0,\ldots, 2^{m},\]
    for $j=1,\ldots,2^{m-2}$, 
  \begin{align*}
(-2^{m-1}+k,-2^{m-1}+j-k-1), \quad &k=0,\ldots, j-1,\\
(-2^{m-1}+k,2^{m-1}+j-k), \quad &k=j,\ldots, 2^{m},
\end{align*}
   and   for $j=-2^{m-2},\ldots,-1$,
   \begin{align*}
   (2^{m-1}-k,2^{m-1}+j+k+1), \quad &k= 0,\ldots,|j|-1,\\
    (2^{m-1}-k, -2^{m-1}+j+k), \quad &k=|j|,\ldots, 2^{m}.
\end{align*}
  Since these pairs appear with equal probability $M^{-2}$, the first claim follows.

On the other hand, for a given $j\in D_{m-i}$ with $i\ge2$, there are only $M-|j|$ different ordered pairs of numbers from the set $D_{m-i+1}$  that add modulo $M$ up to $j$.  These pairs have the form:   
\begin{align*}
& (-2^{m-i}+k,2^{m-i}+j-k),\quad k=j,\ldots, 2^{m-i+1},\quad j=0,\ldots,2^{m-i-1},\\
&(2^{m-i}-k,-2^{m-i}+j+k),\quad k=|j|,\ldots, 2^{m-i+1},\quad j=-2^{m-i-1},\ldots,-1.
\end{align*}
  This yields for $ j=1,\ldots,2^{m-i-1}$,
\begin{align*}
p_{i,m}(j) &= \sum_{k = j}^{2^{m - i +1}} p_{i - 1,m}(-2^{m - i} + k) p_{i - 1,m}(2^{m - i} - k + j),\\
p_{i,m}(-j) &= \sum_{k = j}^{2^{m - i +1}} p_{i - 1,m}(2^{m - i} - k) p_{i - 1,m}(-2^{m - i} +k - j).
\end{align*}
 The stated symmetry property $ p_{i,m}(-j) = p_{i,m}(j) $ now follows recursively from the assumption of uniform distribution. To finish the proof of the lemma, it remains to observe that after replacing $k-2^{m - i}$ by $l$ in the last relation for $p_{i,m}(j) $ we get
 \begin{align*} 
   p_{i,m}(j) &= \sum_{l = j-2^{m - i}}^{2^{m - i }} p_{i-1,m}(l) p_{i-1,m}(j-l),
  \end{align*}
  which in turn equals to 
 \begin{align*} 
 \sum_{l = 0}^{j} p_{i-1,m}(l) p_{i-1,m}(j-l)&+\sum_{l = j+1}^{2^{m - i }} p_{i-1,m}(l) p_{i-1,m}(l-j)+ \sum_{l =j-2^{m - i}}^{-1} p_{i-1,m}(-l) p_{i-1,m}(j-l)\\
   &=\sum_{k= 0}^{j} p_{i-1,m}(k) p_{i-1,m}(j-k)+2\sum_{k = j+1}^{2^{m - i}} p_{i-1,m}(k) p_{i-1,m}(k-j).
  \end{align*}
 \end{proof}
\begin{corollary}\label{cor}
 Comparison of the key recursion in Lemma \ref{thm:pi} with the recursion \eqref{recu} yields
 \[p_{m-i,m}(j)=v_i^{(m)}(j).\]
\end{corollary}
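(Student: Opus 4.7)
The plan is to make the index substitution $i' = m-i$ explicit and then observe that the two recursions coincide. Concretely, I would define $q_i(j) := p_{m-i,m}(j)$ for $1 \le i \le m-1$ and $0 \le j \le 2^{i-1}$, and show that this family satisfies exactly the defining properties of $v_i^{(m)}(j)$. Since the backward recursion \eqref{recu} together with its frontier condition uniquely determines $v_i^{(m)}$, this identification proves the corollary.

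First I would check the frontier. Lemma \ref{thm:pi} gives
$$p_{1,m}(-2^{m-2}) = \cdots = p_{1,m}(2^{m-2}) = (2^m+1)^{-1},$$
so $q_{m-1}(j) = (2^m+1)^{-1}$ for all admissible $j$, which is precisely the frontier condition imposed on $v_{m-1}^{(m)}$.

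Next I would verify the recursion step. Taking Lemma \ref{thm:pi} at level $i' = m - i$, so $i' - 1 = m - (i+1)$ and $2^{m-i'} = 2^i$, the identity
$$p_{i',m}(j) = \sum_{k=0}^{j} p_{i'-1,m}(k)\,p_{i'-1,m}(j-k) + 2\sum_{k=j+1}^{2^{m-i'}} p_{i'-1,m}(k)\,p_{i'-1,m}(k-j)$$
translates directly to
$$q_i(j) = \sum_{k=0}^{j} q_{i+1}(k)\,q_{i+1}(j-k) + 2\sum_{k=j+1}^{2^{i}} q_{i+1}(k)\,q_{i+1}(k-j),$$
which is exactly \eqref{recu}.

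Since $q_{m-1} = v_{m-1}^{(m)}$ and both $\{q_i\}$ and $\{v_i^{(m)}\}$ obey the same backward recursion, a straightforward downward induction on $i$ from $m-1$ gives $q_i(j) = v_i^{(m)}(j)$, and hence $p_{m-i,m}(j) = v_i^{(m)}(j)$. The only thing to be careful about is matching the index range: $0 \le j \le 2^{m-i-1} = 2^{(m-i)-1}$ in Lemma \ref{thm:pi} corresponds to $0 \le j \le 2^{i-1}$ for $q_i$, which is the same range used for $v_i^{(m)}$, so no genuine obstacle arises.
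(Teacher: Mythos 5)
Your proposal is correct and is exactly the argument the paper intends: the corollary is stated as an immediate ``comparison'' of the two recursions, and you have simply made the substitution $i'=m-i$, the matching of the frontier condition $p_{1,m}(j)=(2^m+1)^{-1}=v_{m-1}^{(m)}(j)$, and the uniqueness-by-downward-induction step explicit. The index bookkeeping ($2^{m-i'}=2^i$ and $0\le j\le 2^{i-1}$) checks out, so nothing is missing.
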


 \section{Proof of Theorem \ref{pnm}}

 Recall \eqref{Rnm} and put 
 $$R_{n,m}(j)=2^mv_{m-n}^{(m)}(j),\qquad \phi_{n,m}(x):=\phi_n(x2^{-m}).$$
  We prove Theorem \ref{pnm} by verifying a more general convergence result
  
\begin{equation}\label{unif}
  \alpha_{n,m}:=\max_{0\le j\le 2^{m-n-1}}|R_{n,m}(j)-\phi_{n,m}(j)|\to0,\quad m\to\infty.
\end{equation}
  To this end we use induction over $n$. The base case $n=1$ is trivial. To prove the inductive step observe first that by \eqref{recu}
 \begin{equation}\label{ru}
R_{n,m}(j) = 2^{-m}\sum_{k = 0}^{j} R_{n-1,m}(k) R_{n-1,m}(j-k) + 2^{1-m}\sum_{k = j+1}^{2^{m-n}} R_{n-1,m}(k) R_{n-1,m}(k-j).
\end{equation}
It is easy to see recursively that the constant
 \[ C_n:=\sup_{m>n}\max_{0 \le j \le 2^{m-n-1}}R_{n,m}(j)\]
is finite. 

On the other hand, by \eqref{recp},
 \[\phi_{n,m}(j)=2^{-m}\int_0^{j}\phi_{n-1,m}(u)\phi_{n-1,m}(j-u)du+2^{1-m}\int_j^{2^{m-n}}\phi_{n-1,m}(u)\phi_{n-1,m}(u-j)du,\]
so that
   \begin{align}
\phi_{n,m}(j)&= 2^{-m}\sum_{k = 0}^{j}  \phi_{n-1,m}(k)  \phi_{n-1,m}(j-k) \nonumber\\
&\quad+ 2^{1-m}\sum_{k = j+1}^{2^{m-n}}  \phi_{n-1,m}(k)  \phi_{n-1,m}(k-j)+\epsilon_{n,m}(j), \label{rup}
\end{align}
with accordingly defined remainder term $\epsilon_{n,m}(j)$.
Uniform continuity of $\phi_n(x)$ yields uniform convergence $\epsilon_{n,m}(j)\to0$ as $m\to\infty$, and  \eqref{unif} follows from \eqref{ru} and \eqref{rup}, since 
 \[ \alpha_{n,m}\le 2\big[C_{n-1}+\max_{0\le x\le 2^{-n}}\phi_{n}(x)\big]\alpha_{n-1,m}+\max_{0\le j\le 2^{m-n}}|\epsilon_{n,m}(j)|.\]

%
%

\section{Proof of Theorem \ref{TH}}


The following result is a straightforward corollary of Theorem 1 from \cite{Arratia1989} and is a key tool for our proof here.
\begin{lem} \label{CS}
Let $Z=\sum_{\boldsymbol{i}\in  \boldsymbol J} \chi_{\boldsymbol{i}}$ be a sum of possibly dependent indicator random variables with $\rE(Z)=\zeta$. Suppose there is a family of subsets $\boldsymbol J_{\boldsymbol{i}}\subset\boldsymbol J$ such that for any $\boldsymbol{i} \in \boldsymbol J$ and $\boldsymbol{k} \notin \boldsymbol J_{\boldsymbol{i}}$, indicators $\chi_{\boldsymbol{i}}$ and $\chi_{\boldsymbol{k}}$ are independent. Then
\[
\frac{\zeta}{4(1 - \re^{-\zeta})}\sum_{k = 0}^{\infty} \Big| \rP(Z = k) - \frac{\zeta^{k} \re^{-\zeta}}{k!} \Big| \le 
\sum_{\boldsymbol{i} \in \boldsymbol J} \sum_{\boldsymbol{k} \in \boldsymbol J_{\boldsymbol{i}}} \rE \big( \chi_{\boldsymbol{i}} \big) \rE \big( \chi_{\boldsymbol{k}} \big)
+\sum_{\boldsymbol{i} \in \boldsymbol J} \sum_{\boldsymbol{k} \in \boldsymbol J_{\boldsymbol{i}}\setminus \{\boldsymbol{i} \}}  \rE \big( \chi_{\boldsymbol{i}} \chi_{\boldsymbol{k}} \big).
\]
\end{lem}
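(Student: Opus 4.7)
The plan is to apply Lemma \ref{CS} separately to $V_0$ and to $W$; in each case the work is to choose a dependence neighborhood $\boldsymbol J_{\boldsymbol i}$ and to bound the two combinatorial sums $b_1 := \sum_{\boldsymbol i}\sum_{\boldsymbol k\in\boldsymbol J_{\boldsymbol i}}\rE(\chi_{\boldsymbol i})\rE(\chi_{\boldsymbol k})$ and $b_2 := \sum_{\boldsymbol i}\sum_{\boldsymbol k\in\boldsymbol J_{\boldsymbol i}\setminus\{\boldsymbol i\}}\rE(\chi_{\boldsymbol i}\chi_{\boldsymbol k})$ appearing on the right side of the lemma.

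For $V_0=\sum_{\boldsymbol i\in\boldsymbol J}\chi_{\boldsymbol i}$ with $\chi_{\boldsymbol i}=\mathbbm{1}\{S_{\boldsymbol i}\stackrel{M}{=}0\}$ and $S_{\boldsymbol i}:=a_{i_1,1}+\cdots+a_{i_N,N}$, the pivotal observation is that the indicators are \emph{pairwise} independent. Indeed, for distinct $\boldsymbol i,\boldsymbol k$ pick any column $j^*$ with $i_{j^*}\ne k_{j^*}$; then conditioning on every cell of $\boldsymbol A$ other than $a_{i_{j^*},j^*}$ and $a_{k_{j^*},j^*}$, the residues $S_{\boldsymbol i}\bmod M$ and $S_{\boldsymbol k}\bmod M$ are deterministic translates of those two still-independent uniform cells and hence jointly uniform on $\mathbb{Z}_M^2$. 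This permits the choice $\boldsymbol J_{\boldsymbol i}=\{\boldsymbol i\}$, producing $b_2=0$ and $b_1=L^N M^{-2}=\mu M^{-1}$, and Lemma \ref{CS} immediately delivers the first claimed bound $4(1-\re^{-\mu})M^{-1}$.

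For $W=\sum_{\boldsymbol i\in\boldsymbol J}\chi_{\boldsymbol i}$ with $\chi_{\boldsymbol i}=\mathbbm{1}\{H_n(\boldsymbol a_{\boldsymbol i})=0\}$, $\chi_{\boldsymbol i}$ depends only on the $N$ cells $\{a_{i_j,j}\}_{j}$, so the natural choice is $\boldsymbol J_{\boldsymbol i}=\{\boldsymbol k\in\boldsymbol J:k_j=i_j\text{ for some }j\}$, with $|\boldsymbol J_{\boldsymbol i}|=L^N-(L-1)^N\le NL^{N-1}$, and for $\boldsymbol k\notin\boldsymbol J_{\boldsymbol i}$ the indicators $\chi_{\boldsymbol i},\chi_{\boldsymbol k}$ are built from disjoint cells and are independent. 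The main obstacle is controlling $\rE(\chi_{\boldsymbol i}\chi_{\boldsymbol k})$ for overlapping pairs, because $H_n$ is nonlinear through the $\Delta$-truncations; the trick I would use is the structural fact that every Wagner solution is a zero-sum vector, which yields $\chi_{\boldsymbol k}\le Z_{\boldsymbol k}:=\mathbbm{1}\{S_{\boldsymbol k}\stackrel{M}{=}0\}$ and so $\rE(\chi_{\boldsymbol i}\chi_{\boldsymbol k})\le\rE(\chi_{\boldsymbol i}Z_{\boldsymbol k})$.

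At this point the argument used for $V_0$ re-enters: for $\boldsymbol i\ne\boldsymbol k$, conditional on the entire vector $\boldsymbol a_{\boldsymbol i}$, the sum $S_{\boldsymbol k}\bmod M$ still contains at least one cell $a_{k_{j^*},j^*}$ which is independent of $\boldsymbol a_{\boldsymbol i}$ and uniform, so $\rP(Z_{\boldsymbol k}=1\mid\boldsymbol a_{\boldsymbol i})=M^{-1}$ and hence $\rE(\chi_{\boldsymbol i}\chi_{\boldsymbol k})\le p_{n,m}M^{-1}$. Combining with $p_{n,m}\le M^{-1}$ (same structural fact) this gives $b_1\le L^N\cdot NL^{N-1}\cdot p_{n,m}^2\le\lambda\mu NL^{-1}$ and $b_2\le L^N\cdot NL^{N-1}\cdot p_{n,m}M^{-1}=\lambda\mu NL^{-1}$. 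Substituting $b_1+b_2\le 2\lambda\mu NL^{-1}$ into Lemma \ref{CS} yields the second claimed bound $8(1-\re^{-\lambda})\mu NL^{-1}$.
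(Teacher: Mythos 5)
Your proposal does not prove the statement it is attached to. The statement is Lemma \ref{CS} itself---the abstract Chen--Stein bound for a sum $Z=\sum_{\boldsymbol i\in\boldsymbol J}\chi_{\boldsymbol i}$ of dependent indicators equipped with dependence neighbourhoods $\boldsymbol J_{\boldsymbol i}$. Your text instead \emph{assumes} Lemma \ref{CS} and carries out its application to $V_0$ and $W$; that is the content of Theorem \ref{TH}, not of the lemma. As an argument for the lemma it is circular: nowhere do you relate the $\ell_1$ distance $\sum_{k}|\rP(Z=k)-\zeta^{k}\re^{-\zeta}/k!|$ to the two sums $b_1$ and $b_2$ appearing on the right-hand side, whether by setting up the Stein equation and bounding the solution of the Stein operator or by any other route. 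The paper disposes of the lemma by noting it is a direct corollary of Theorem 1 of Arratia, Goldstein and Gordon (1989): their bound contains a third term
\[
b_3=\sum_{\boldsymbol i\in\boldsymbol J}\rE\Big|\rE\big(\chi_{\boldsymbol i}-\rE(\chi_{\boldsymbol i})\mid \sigma(\chi_{\boldsymbol k}:\boldsymbol k\notin\boldsymbol J_{\boldsymbol i})\big)\Big|,
\]
which vanishes under the independence hypothesis of the lemma, and the sum over $k$ is twice the total variation distance, which accounts for the normalisation $\zeta/\big(4(1-\re^{-\zeta})\big)$. A complete answer would either reproduce the Chen--Stein argument or make exactly this reduction explicit; your proposal does neither.

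For what it is worth, the material you did write tracks the paper's proof of Theorem \ref{TH} almost exactly: the same choice of neighbourhoods, the same pairwise-independence computation for $V_0$ giving $b_2=0$ and $b_1=\mu M^{-1}$, and the same domination of the Wagner indicator by the zero-sum indicator for $W$, leading to $b_1+b_2\le 2\lambda\mu NL^{-1}$. So that work is sound---it simply answers a different question from the one posed.
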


We start the proof of Theorem \ref{TH}
by observing that $V_0=\sum_{\boldsymbol{i}\in  \boldsymbol J} \chi_{\boldsymbol{i}}$, where the indicator random variables
$$\chi_{\boldsymbol{i}}=1_{\{a_{i_1,1}+\ldots+a_{i_N,N}\stackrel{M}{=}0\}},\quad \boldsymbol i=(i_1,\ldots, i_{N})$$
are identically distributed  with $E(\chi_{\boldsymbol{i}})=M^{-1}$,  and mutually independent. Independence is due to the 
defining property of the matrix $\boldsymbol A$. Indeed, if $\boldsymbol{k} \ne \boldsymbol{i} $ and (without loss of generality) $1,\ldots,j$ are the coordinates where these two vectors differ, then 
\begin{align*}
 \rP(a_{k_1,1}&+\ldots+a_{k_N,N} \stackrel{M}{=} a_{i_1,1}+\ldots+a_{i_N,N} \stackrel{M}{=}0) \\
 &=\rP(a_{k_1,1}+\ldots+a_{k_j,j} \stackrel{M}{=} a_{i_1,1}+\ldots+a_{i_j,j}\stackrel{M}{=}- a_{i_{j+1},j+1}-\ldots-a_{i_N,N})\\
 &=\sum_{b\in D_m}\rP(a_{k_1,1}+\ldots+a_{k_j,j} \stackrel{M}{=} b; a_{i_1,1}+\ldots+a_{i_j,j}\stackrel{M}{=}b; a_{i_{j+1},j+1}+\ldots+a_{i_N,N}\stackrel{M}{=}- b)\\
 &=M^{-1}\sum_{b\in D_m}\rP(a_{i_1,1}+\ldots+a_{i_j,j}\stackrel{M}{=}b; a_{i_{j+1},j+1}+\ldots+a_{i_N,N}\stackrel{M}{=}- b)= M^{-2}. 
\end{align*}
Therefore, we can apply  Lemma \ref{CS} with $\boldsymbol J_{\boldsymbol{i}}=\{\boldsymbol{i} \}$, and the Chen-Stein bound for $V_0$ follows from $\rE(V_0)=\mu$ and
\[ \sum_{\boldsymbol{i} \in \boldsymbol J} \sum_{\boldsymbol{k} \in B_{\boldsymbol{i}}} \rE \big( \chi_{\boldsymbol{i}} \big) \rE \big( \chi_{\boldsymbol{k}} \big)
 = L^{N} M^{-2}=\mu M^{-1}. \]

To obtain the Chen-Stein  bound for $W$, we define $\boldsymbol J_{\boldsymbol{i}}$ as the set of $\boldsymbol{k} \in L$ such that  vectors $\boldsymbol{i}$ and $\boldsymbol{k} $ share at least one component. Observe that 
\[|\boldsymbol J_{\boldsymbol{i}}| = L^{N} - (L - 1)^{N}. \]
By definition of $W$, 
$$W=\sum_{\boldsymbol{i}\in  \boldsymbol J} \chi_{\boldsymbol{i}},\quad \chi_{\boldsymbol{i}}=1_{\{H_n(\boldsymbol a_{\boldsymbol  i})=0\}},$$ 
so that $\rE \big( \chi_{\boldsymbol{i}}\big) = p_{n,m}$ and therefore,
\begin{equation*} \label{eq:b1}
\sum_{\boldsymbol{i} \in \boldsymbol J} \sum_{\boldsymbol{k} \in \boldsymbol J_{\boldsymbol{i}}} \rE \big( \chi_{\boldsymbol{i}} \big) \rE \big( \chi_{\boldsymbol{k}} \big)
 = L^{N} \big( L^{N} - (L - 1)^{N} \big) p_{n,m}^{2} \le NL^{- 1} \lambda^2.
\end{equation*}
Since a Wagner's solution is necessarily is a zero-sum vector, we have for $\boldsymbol  i\ne\boldsymbol  k$,
\begin{align*}
 \rE \big( \chi_{\boldsymbol{i}} \chi_{\boldsymbol{k}} \big)= \rP(H_n&(\boldsymbol a_{\boldsymbol  i})=0; H_n(\boldsymbol a_{\boldsymbol  k})=0)\le \rP(a_{k_1,1}+\ldots+a_{k_N,N} \stackrel{M}{=} 0; H_n(\boldsymbol a_{\boldsymbol  i})=0). 
\end{align*}
Let $l_1,\ldots,l_j$ are the coordinates where the vectors $\boldsymbol  i, \boldsymbol  k$ differ. Then it follows that
\begin{align*}
 \rE \big( \chi_{\boldsymbol{i}} \chi_{\boldsymbol{k}} \big) &\le\sum_{b\in D_m}\rP(a_{k_{l_1},l_1}+\ldots+a_{k_{l_j},l_j} \stackrel{M}{=} b; a_{i_{l_1},l_1}+\ldots+a_{i_{l_j},l_j}\stackrel{M}{=}b; H_n(\boldsymbol a_{\boldsymbol  i})=0)\\
  &=M^{-1}\sum_{b\in D_m}\rP(a_{i_{l_1},{l_1}}+\ldots+a_{i_{l_j},lj}\stackrel{M}{=}b; H_n(\boldsymbol a_{\boldsymbol  i})=0)= M^{-1}p_{n,m},
\end{align*}
and we get
\begin{equation*} \label{eq:b2}
\sum_{\boldsymbol{i} \in \boldsymbol J} \sum_{\boldsymbol{k} \in \boldsymbol J_{\boldsymbol{i}}\setminus \{\boldsymbol{i} \}}  \rE \big( \chi_{\boldsymbol{i}} \chi_{\boldsymbol{k}} \big)
\le  L^{N} \big( L^{N} - (L - 1)^{N}  \big) p_{n,m} M^{-1}\le NL^{- 1} \lambda\mu.
\end{equation*}
The proof is finished by applying once again  Lemma \ref{CS}.

\vspace{1cm}
\noindent{\bf Acknowledgements}. The first author is grateful to Vladimir Vatutin and Andrey Zubkov for formulating an initial problem setting that eventually lead to this research project.

\end{document}